\author{Roberto Albesiano}
\thanks{This paper was partially supported by Simons Foundation International, LTD.\\ \indent This preprint has not undergone peer review or any post-submission improvements or corrections. The Version of Record of this article is published in \emph{Math.~Z.}, and is available online at \url{https://doi.org/10.1007/s00209-024-03435-6}}
\address{Mathematics Department, Stony Brook University}
\email*{ralbesiano@math.stonybrook.edu}
\urladdr{https://www.math.stonybrook.edu/~ralbesiano/}
\title{A degeneration approach to\\ Skoda's Division Theorem}
\subjclass{32L10, 32E10, 32G08, 32F32}
\keywords{Skoda's division theorem, ideal membership, degeneration techniques, $L^2$ techniques, positivity of direct image bundle}
\begin{document}

\begin{abstract}
  We prove a Skoda-type division theorem via a degeneration argument. The proof is inspired by B.~Berndtsson and L.~Lempert's approach to the $L^2$ extension theorem and is based on positivity of direct image bundles. The same tools are then used to slightly simplify and extend the proof of the $L^2$ extension theorem given by Berndtsson and Lempert.
\end{abstract}

\maketitle

\section{Introduction}
This paper employs degeneration methods to prove an $L^2$ division theorem. The proof is inspired by a similar technique by B.~Berndtsson and L.~Lempert to give a new proof of a sharp $L^2$ extension theorem \cite{BerndtssonLempert2016}. In the process, we discover a tool that slightly simplifies and extends the proof of $L^2$ extension by Berndtsson and Lempert.

In the following $H^0(X,V)$ always denotes the space of holomorphic sections of the holomorphic vector bundle $V \rightarrow X$.

\begin{maintheorem}[\textbf{$L^2$ division}]\label{thm:SkodaBL}
  Let $X$ be a Stein manifold and let $E,G \rightarrow X$ be holomorphic line bundles with \paren{singular} Hermitian metrics $\e^{-\varphi}$ and $\e^{-\psi}$, respectively. Fix $h = (h_1, \dots, h_r) \in H^0(X, (E^* \otimes G)^{\oplus r})$ and $1 < \alpha < \frac{r+1}{r-1}$. Assume that
  \[
    \I \ddbar \varphi \geq \frac{\alpha(r-1)}{\alpha(r-1)+1} \I \ddbar \psi.
  \]
  Then for any holomorphic section $g \in H^0(X,G \otimes K_X)$ such that
  \[
    \norm{g}_G^2 := \int_X \frac{|g|^2 \e^{-\psi}}{(|h|^2 \e^{-\psi+\varphi})^{\alpha(r-1)+1}} < +\infty
  \]
  there is a holomorphic section $f = (f_1, \dots, f_r) \in H^0(X, E^{\oplus r} \otimes K_X)$ such that
  \[
    g = h \dototimes f := h_1 \otimes f_1 + \dots + h_r \otimes f_r
  \]
  and
  \[
    \begin{split}
      \norm{f}_{E^{\oplus r}}^2 &:= \int_X \frac{|f|^2 \e^{-\varphi}}{(|h|^2 \e^{-\psi+\varphi})^{\alpha(r-1)}} \\
      &\leq r \frac{\alpha}{\alpha-1} \int_X \frac{|g|^2 \e^{-\psi}}{(|h|^2 \e^{-\psi+\varphi})^{\alpha(r-1)+1}} = r \frac{\alpha}{\alpha-1} \norm{g}_G^2.
    \end{split}
  \]
\end{maintheorem}

If the number of generators $r$ is at most $\dim X + 1$, Theorem \ref{thm:SkodaBL} almost recovers the line bundle version of Skoda's Theorem (see Theorem \ref{thm:full-division}). We say ``almost'' because in Theorem \ref{thm:SkodaBL} we have $\norm{f}_{E^{\oplus r}}^2 \leq \frac{r\alpha}{\alpha-1} \norm{g}_G^2$ rather than $\norm{f}_{E^{\oplus r}}^2 \leq \frac{\alpha}{\alpha-1} \norm{g}_G^2$. Still, even though stronger results are known, our intent is to emphasize the technique used to prove Theorem \ref{thm:SkodaBL}. Both the original proof of Skoda \cite{Skoda1972} and more recent generalizations \cite{Varolin2008} are based on functional analysis and the Bochner formula, plus some very careful linear algebra estimates. Here instead we present a degeneration argument based on Berndtsson's Theorem on the positivity of direct image bundles \cite{Berndtsson2009}. Even though this degeneration approach is aesthetically pleasing, it is not clear if it can recover the theorem in full generality. Part of the goal of this project is to see if one can use degeneration techniques to achieve the same results obtained by the ostensibly more powerful $L^2$ methods.

The general philosophy is inspired by Berndtsson and Lempert's proof of the $L^2$ extension theorem \cite{BerndtssonLempert2016, Lempert2017} and by T.~Ohsawa's proof of a Skoda-type division theorem as a corollary of the Ohsawa--Takegoshi $L^2$ extension theorem. Ohsawa indeed remarks that the division problem can be reformulated as an extension problem on the projectivizations of the dual bundles (see \cite{Ohsawa2002}, \cite{Ohsawa2004} and \cite[Section 3.2]{Ohsawa2015}). It is thus natural to wonder whether a Skoda-type theorem could be proved directly using techniques akin to \cite{BerndtssonLempert2016}.

The main idea is to look at all possible linear combinations $v_1 \otimes f_1(x) + \dots + v_r \otimes f_r(x)$. Then one constructs a positively curved family of metrics that at one extreme ``localizes'' the problem at the point of interest $v = h(x)$ and at the other extreme retrieves the usual $L^2$-norm for $f$. Near $h(x)$ the optimal solution to the division problem is somehow ``trivial''; for instance, if $h(x) = (h_1(x), 0, \dots, 0)$, one takes $f(x) = (g(x) h_1(x)^{-1}, 0, \dots, 0)$. The positivity of the direct image bundle \cite{Berndtsson2009} will then imply that one can control $\norm{f}_{E^{\oplus r}}$ by the norm of the trivial solution near $h(x)$. This last step is an instance of a more general extrapolation technique for estimating operator norms under suitable curvature conditions \cite{Lempert2017}.

We start in Section \ref{sec:background} by recalling the definition of singular Hermitian metric (Definition \ref{def:shm}) and the fundamental theorem of Berndtsson on direct images (Theorem \ref{thm:Berndtsson}). We also recall the full statements of $L^2$ division (Theorem \ref{thm:full-division}) and $L^2$ extension (Theorem \ref{thm:full-extension}) as obtained with the standard $L^2$ technique. Next, in Section \ref{sec:calculus-lemma} we prove a calculus lemma that will be used in the proof of both Theorem \ref{thm:SkodaBL} and Theorem \ref{thm:L2-extension} below. The majority of this paper is then devoted to the proof of Theorem \ref{thm:SkodaBL}. In Section \ref{sec:reductions} we reduce to the case in which $h_1, \dots, h_r$ have no common zeros and $X$ is a relatively compact domain in some Stein manifold. Section \ref{sec:dual-formulation} reformulates the division problem as an estimate on the dual norm of some special functionals on $H^0(X, E^{\oplus r} \otimes K_X)$. The main argument for Theorem \ref{thm:SkodaBL} is in Section \ref{sec:main-argument}: after setting up the degeneration in \ref{subsec:setup}, we compute the extrema of the family of metrics in \ref{subsec:extrema} and we apply Berndtsson's Theorem in \ref{subsec:convexity} to conclude the proof.

It turns out that the same schema can be used to prove the following version of $L^2$ extension.

\begin{maintheorem}[\textbf{$L^2$ extension}]\label{thm:L2-extension}
  Let $X$ be a Stein manifold and $Z \subset X$ an analytic hypersurface. Let $L_Z \rightarrow X$ be the holomorphic line bundle associated to $Z$, with $T \in H^0(X, L_Z)$ such that $Z = (T=0)$ and $\dif T|_Z$ generically non-zero. Assume moreover that $L_Z$ carries a \paren{singular} Hermitian metric $\e^{-\lambda}$ such that $\e^{-\lambda}|_Z \not\equiv +\infty$ and $\sup_X |T|^2 \e^{-\lambda} \leq 1$.
  
  Let $L \rightarrow X$ be a line bundle with \paren{singular} Hermitian metric $\e^{-\varphi}$ such that 
  \[
    \I \ddbar \varphi \geq - \I \ddbar \lambda \quad \text{and} \quad \I \ddbar \varphi \geq \delta \I \ddbar \lambda
  \]
  for some $\delta > 0$. Then for any holomorphic section $f \in H^0(Z, L|_Z \otimes K_Z)$ such that 
  \[
    \norm{f}_Z^2 := \int_Z |f|^2 \e^{-\varphi} < +\infty
  \]
  there is a holomorphic section $F \in H^0(X, L \otimes L_Z \otimes K_X)$ such that $F|_Z = f \wedge \dif T$ and 
  \[
    \norm{F}_X^2 := \int_X |F|^2 \e^{-\lambda - \varphi} \leq \pi \left( 1 + \frac{1}{\delta} \right) \int_Z |f|^2 \e^{-\varphi} = \pi \left( 1 + \frac{1}{\delta} \right) \norm{f}_Z^2.
  \]
\end{maintheorem}

We want to emphasize that the sole difference between this last statement and the actual $L^2$ extension theorem (Theorem \ref{thm:full-extension}) is that here we require
\begin{equation}\label{eq:l2-curvature-condition}
  \begin{cases}
    \I \ddbar \varphi \geq -\I \ddbar \lambda \\
    \I \ddbar \varphi \geq \delta \I \ddbar \lambda
  \end{cases}
\end{equation}
instead of the weaker 
\[
  \begin{cases}
    \I \ddbar \varphi \geq 0 \\
    \I \ddbar \varphi \geq \delta \I \ddbar \lambda
  \end{cases}.
\]
In particular, the two conditions are clearly the same in the special case $\I\ddbar\lambda \geq 0$ and Theorem \ref{thm:L2-extension} recovers the hyperplane case of Berndtsson and Lempert's Theorem \cite[Theorem 3.8]{BerndtssonLempert2016}.

A key role in the proof of Theorem \ref{thm:L2-extension}, which is summarized in Section \ref{sec:L2-extension}, will be played by the seemingly innocuous Lemma \ref{lemma:calculusEstimate}. This slightly improved version of \cite[Lemma 3.4]{BerndtssonLempert2016} is indeed what allows us to prove the theorem under the assumption \eqref{eq:l2-curvature-condition} rather than $\I\ddbar\lambda\geq0$, and at the same time simplify the proof.

The assumption that $\codim_X Z = 1$ in Theorem \ref{thm:L2-extension} makes the statement and the proof cleaner but is not really needed. As remarked in Section \ref{sec:L2-higherCodim} (in particular Theorem \ref{thm:L2-nonAdjoint}), in fact minimal modifications of the same arguments prove analogous statements for $Z$ of higher codimension.

\vspace{0.5\baselineskip}
\paragraph*{\textbf{Acknowledgements}}
I am grateful to Dror Varolin for bringing this topic to my attention, for many helpful discussions and a lot of encouragement. I also thank Bo Berndtsson, L\'{a}szl\'{o} Lempert, Christian Schnell, and Xu Wang for providing useful comments and suggestions. Finally, I thank the anonymous referees for their helpful observations.

\section{Background and notation}\label{sec:background}
Let $L$ be a holomorphic line bundle over a complex manifold $X$.

\begin{definition}[\cite{Demailly1992}]\label{def:shm}
  A \emph{singular Hermitian metric} for $L$ is a measurable section $\e^{-\phi}$ of $L^* \otimes (L^*)^\dagger \rightarrow X$ such that for any holomorphic frame $\xi$ of $L$ over $U \subset X$ the measurable function $|\xi|^2\e^{-\phi}$ is non-negative and $\log(|\xi|^2\e^{-\phi}) \in L^1_{\text{loc}}(U)$.
\end{definition}

Note that, according to this definition, smooth Hermitian metrics are in fact singular Hermitian metrics.

Assume now that $L$ is a holomorphic line bundle over an ambient Stein manifold $Y$ and let $X$ be relatively compact in $Y$. Consider the trivial fibration $X \times \mathbb{D} \rightarrow \mathbb{D}$, where $\mathbb{D}$ is the complex unit disk. Let $p: X \times \mathbb{D} \rightarrow X$ be the projection on the first factor and $\e^{-\phi}$ be a Hermitian metric for $p^*L \rightarrow X \times \mathbb{D}$ that is smooth up to the vertical boundary of $X \times \mathbb{D}$. For each $\tau \in \mathbb{D}$, define the Hilbert space 
\[
  \mathcal{H}_\tau(\e^{-\phi}) := \left\{ f \in H^0(X, L \otimes K_X) \, \middle| \, \, \norm{f}_\tau^2 := \int_X |f|^2 \e^{-\phi_\tau} < +\infty \right\},
\]
where $\e^{-\phi_\tau}$ is the restriction of $e^{-\phi}$ to $p^* L|_{X \times \{\tau\}}$.

Since the metric $\e^{-\phi}$ is smooth up to the boundary of $X$, all these Hilbert spaces are in fact trivially the same as vector spaces but have norms that vary with $\tau$. Thus, they form a trivial vector bundle with a non-trivial metric. The following theorem is due to B.~Berndtsson.

\begin{theorem}[\cite{Berndtsson2009}]\label{thm:Berndtsson}
  Assume that $\e^{-\phi}$ is a smooth Hermitian metric for $p^*L \rightarrow X \times \mathbb{D}$ with non-negative curvature and let $\xi \not\equiv 0$ be a holomorphic section of the vector bundle $\mathcal{H}_\tau(\e^{-\phi})^* \rightarrow \mathbb{D}$. Then the function 
  \[
    \mathbb{D} \ni \tau \longmapsto \log\norm{\xi_t}_{\tau,*}^2
  \]
  is subharmonic.
\end{theorem}

Thus in particular $\tau \mapsto \log\norm{\xi}_{t,*}^2$ is subharmonic for any fixed non-zero $\xi \in H^0(X, L \otimes K_X)^*$ with finite $L^2$-norm.

Next, we recall the complete statements of $L^2$ division and $L^2$ extension obtained from the original proofs based on $L^2$ methods.

\begin{theorem}[\textbf{$L^2$ division}]\label{thm:full-division}
  Let $X$ be a Stein manifold of complex dimension $n$ and let $E,G \rightarrow X$ be holomorphic line bundles with \paren{singular} Hermitian metrics $\e^{-\varphi}$ and $\e^{-\psi}$, respectively. Fix $h = (h_1, \dots, h_r) \in H^0(X, (E^* \otimes G)^{\oplus r})$ and $\alpha>1$. Let $q := \min(r-1,n)$ and assume that
  \[
    \I \ddbar \varphi \geq \frac{\alpha q}{\alpha q+1} \I \ddbar \psi.
  \]
  Then, for any holomorphic section $g \in H^0(X,G \otimes K_X)$ such that
  \[
    \norm{g}_G^2 := \int_X \frac{|g|^2 \e^{-\psi}}{(|h|^2 \e^{-\psi+\varphi})^{\alpha q + 1}} < +\infty,
  \]
  there is a holomorphic section $f = (f_1, \dots, f_r) \in H^0(X, E^{\oplus r} \otimes K_X)$ such that
  \[
    g = h \dototimes f := h_1 \otimes f_1 + \dots + h_r \otimes f_r
  \]
  and
  \[
    \begin{split}
      \norm{f}_{E^{\oplus r}}^2 &:= \int_X \frac{|f|^2 \e^{-\varphi}}{(|h|^2 \e^{-\psi+\varphi})^{\alpha q}} \\
      &\leq \frac{\alpha}{\alpha-1} \int_X \frac{|g|^2 \e^{-\psi}}{(|h|^2 \e^{-\psi+\varphi})^{\alpha q + 1}} = \frac{\alpha}{\alpha-1} \norm{g}_G^2
    \end{split}
  \]
  \paren{see \cite[Th\'eor\`eme 6.2]{Demailly1982} and \cite[Theorem 2.1]{Varolin2008}}.
\end{theorem}

Once more, in Theorem \ref{thm:SkodaBL} the constant $q$ is everywhere replaced by $r-1$, the constant $\frac{\alpha}{\alpha-1}$ is replaced by $\frac{r\alpha}{\alpha-1}$, and $\alpha$ cannot be greater than $\frac{r+1}{r-1}$. Consequently, Theorem \ref{thm:SkodaBL} is strictly weaker than Theorem \ref{thm:full-division}.

\begin{theorem}[\textbf{$L^2$ extension}]\label{thm:full-extension}
  Let $X$ be a Stein manifold and $Z \subset X$ an analytic hypersurface. Let $L_Z \rightarrow X$ be the holomorphic line bundle associated to $Z$, with $T \in H^0(X, L_Z)$ such that $Z = (T=0)$ and $\dif T|_Z$ generically non-zero. Assume moreover that $L_Z$ carries a \paren{singular} Hermitian metric $\e^{-\lambda}$ such that $\e^{-\lambda}|_Z \not\equiv +\infty$ and $\sup_X |T|^2 \e^{-\lambda} \leq 1$.
  
  Let $L \rightarrow X$ be a line bundle with \paren{singular} Hermitian metric $\e^{-\varphi}$ such that 
  \[
    \I \ddbar \varphi \geq 0 \quad \text{and} \quad \I \ddbar \varphi \geq \delta \I \ddbar \lambda
  \]
  for some $\delta > 0$. Then for any holomorphic section $f \in H^0(Z, L|_Z \otimes K_Z)$ such that 
  \[
    \norm{f}_Z^2 := \int_Z |f|^2 \e^{-\varphi} < +\infty
  \]
  there is a holomorphic section $F \in H^0(X, L \otimes L_Z \otimes K_X)$ such that $F|_Z = f \wedge \dif T$ and 
  \[
    \norm{F}_X^2 := \int_X |F|^2 \e^{-\lambda - \varphi} \leq \pi \left( 1 + \frac{1}{\delta} \right) \int_Z |f|^2 \e^{-\varphi} = \pi \left( 1 + \frac{1}{\delta} \right) \norm{f}_Z^2
  \]
  \paren{see \cite[Theorem 1]{Blocki2013} and \cite[Theorem 2.1]{GuanZhou2015}}.
\end{theorem}

Again, the difference between Theorem \ref{thm:L2-extension} and Theorem \ref{thm:full-extension} is that in the latter we have a slightly weaker requirement on the curvature of $\e^{-\varphi}$.

Here we have only considered the case of extension from a hypersurface since in this case all objects are always naturally defined. See \cite{BerndtssonLempert2016,Ohsawa2001,Manivel1993,Demailly2000} for the corresponding results in higher codimension.

\section{A calculus lemma}\label{sec:calculus-lemma}
We now establish the following lemma, which plays a key role in the proofs of both Theorem \ref{thm:SkodaBL} and Theorem \ref{thm:L2-extension}. The result is a slight modification of Lemma 3.4 in \cite{BerndtssonLempert2016}.

\begin{lemma}\label{lemma:calculusEstimate}
  Let $\nu: (-\infty,0] \rightarrow \mathbb{R}_+$ be an increasing function such that
  \[
    \lim_{t \rightarrow -\infty} \e^{-Bt} \nu(t) = A < +\infty
  \]
  for some $B > 0$. Then, for all $p>B$,
  \[
    \lim_{t \rightarrow -\infty} \e^{-Bt} \int_t^0 \e^{-p(s-t)} \dif \nu(s) = \frac{AB}{p-B}.
  \]
\end{lemma}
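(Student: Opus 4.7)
The plan is to reduce the liminf estimate to an integral identity and argue by contradiction. Set
\[
  I(t) := \int_t^0 e^{-p(s-t)}\,d\nu(s), \qquad L(t) := e^{-Bt} I(t) = \int_t^0 e^{(p-B)t-ps}\,d\nu(s),
\]
so that we must show $\liminf_{t\to-\infty} L(t) \leq AB/(p-B)$. First I would apply Fubini to the positive double integral $\int_T^0 L(t)\,dt$, swapping the order of integration over $\{T \leq t \leq s \leq 0\}$ and performing the elementary inner integral in $t$; a short calculation, using $e^{(p-B)T}\cdot e^{-pT}I(T) = L(T)$, yields the clean identity
\[
  L(T) + (p-B)\int_T^0 L(t)\,dt = \int_T^0 e^{-Bs}\,d\nu(s), \qquad T < 0.
\]
As a preliminary observation, on $[T,0]$ the weight $e^{-p(s-T)}$ is bounded below by $e^{pT}$, so $I(T) < \infty$ already forces $\nu(0^-) < \infty$ and every quantity in the identity is finite.

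Next, suppose by contradiction that $\liminf_{t\to-\infty} L(t) > AB/(p-B)$. Then there exist $C > AB/(p-B)$ and $T_0 < 0$ with $L(t) \geq C$ for all $t < T_0$, giving the lower bound $\int_T^0 L(t)\,dt \geq C|T| + O(1)$ as $T \to -\infty$. For the right-hand side of the identity, given any $\eta > 0$ the hypothesis on $\nu$ supplies $S_0 < 0$ such that $\nu(s) \leq (A+\eta) e^{Bs}$ for $s < S_0$. Stieltjes integration by parts gives
\[
  \int_T^0 e^{-Bs}\,d\nu(s) = \nu(0^-) - e^{-BT}\nu(T) + B\int_T^0 e^{-Bs}\nu(s)\,ds,
\]
where the boundary term at $T$ is non-positive, $\nu(0^-)$ is a finite constant, and splitting the bulk integral at $S_0$ bounds it by $B(A+\eta)|T| + O_\eta(1)$.

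Plugging these two estimates into the identity yields
\[
  L(T) \leq \bigl[B(A+\eta) - (p-B)C\bigr]|T| + O_\eta(1).
\]
Since $C > AB/(p-B)$, for $\eta$ small enough the coefficient of $|T|$ is strictly negative, so $L(T) \to -\infty$ as $T \to -\infty$. This contradicts $L \geq 0$, and the contradiction establishes the lemma.

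The main delicate point I expect is the bookkeeping of the ``tail near zero'' of $\int_T^0 e^{-Bs}\,d\nu(s)$: one has to verify that $\nu(0^-) < \infty$ so that the Stieltjes integration by parts is legitimate and the near-$0$ contributions are absorbed into $O_\eta(1)$ rather than polluting the leading-order balance in $|T|$. Once that is secured, the quantitative constant $AB/(p-B)$ drops out automatically from equating the two linear-in-$|T|$ rates.
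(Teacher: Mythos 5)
Your proof is correct and is essentially the paper's argument: the same Fubini computation on $\int_T^0 L(t)\,dt$, the same Stieltjes integration by parts and tail estimate from the hypothesis on $\nu$, and a contradiction extracted from the resulting linear-in-$|T|$ balance. The only (cosmetic) difference is that you retain the boundary term $L(T)$ to get an exact identity and force $L(T)\to-\infty$, whereas the paper discards that non-negative term and instead contradicts the bound on the Ces\`aro average $\frac{1}{R}\int_{-R}^0 L$; your explicit remark that $\nu(0^-)<\infty$ is needed is a point the paper leaves implicit.
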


\begin{remark}
  In contrast to Lemma 3.4 in \cite{BerndtssonLempert2016}, we do not require $\nu$ to be bounded above by $A \e^{Bt}$ for \emph{all} $t < 0$. This weakened hypothesis allows us to obtain the more precise estimates needed for Theorem \ref{thm:SkodaBL} and Theorem \ref{thm:L2-extension}.
\end{remark}

\begin{proof}
  Integrating by parts one gets 
  \[
    \begin{split}
      \e^{-Bt}&\int_t^0 \e^{-p(s-t)} \dif\nu(s) = \e^{(p-B)t} \int_t^0 \e^{-ps} \dif\nu(s) \\
      &= \e^{(p-B)t} \left[ \nu(0) - \e^{-pt} \nu(t) + p \int_t^0 \e^{-ps} \nu(s) \dif s \right].
    \end{split}
  \]

  By the assumptions we have 
  \[
    \lim_{t \to -\infty} \e^{(p-B)t} \left( \nu(0) - \e^{-pt}\nu(t) \right) = -A.
  \]
  Moreover, for any $\varepsilon>0$ there is $t_\varepsilon < 0$ such that $(A-\varepsilon)\e^{Bt} \leq \nu(t) \leq (A+\varepsilon) \e^{Bt}$ for all $t \leq t_\varepsilon$. Then 
  \[
    \begin{split}
      &\int_t^0 \e^{-ps}\nu(s) \dif s \leq \int_{t_\varepsilon}^0 \e^{-ps}\nu(s) \dif s + (A+\varepsilon) \int_t^{t_\varepsilon} \e^{-(p-B)s} \dif s \\
      &\quad \leq C_\varepsilon + \frac{A+\varepsilon}{p-B} \left( \e^{-(p-B)t} - \e^{-(p-B)t_\varepsilon} \right) = C'_\varepsilon + \frac{A+\varepsilon}{p-B} \e^{-(p-B)t}
    \end{split}
  \]
  and similarly
  \[
    \int_t^0 \e^{-ps} \nu(s) \dif s \geq C''_\varepsilon + \frac{A-\varepsilon}{p-B} \e^{-(p-B)t},
  \]
  so that
  \[
    \frac{A-\varepsilon}{p-B} \leq \lim_{t \to -\infty} \e^{(p-B)t} \int_t^0 \e^{-ps}\nu(s) \dif s \leq \frac{A+\varepsilon}{p-B}.
  \]
  Since this holds for all $\varepsilon>0$ we conclude that 
  \[
    \lim_{t \to -\infty} \e^{(p-B)t} p \int_t^0 \e^{-ps}\nu(s) \dif s = \frac{Ap}{p-B}
  \]
  and then
  \[
    \lim_{t \to -\infty} \e^{-Bt} \int_t^0 \e^{-p(s-t)} \dif\nu(s) = -A + \frac{Ap}{p-B} = \frac{AB}{p-B},
  \]
  as wanted.
\end{proof}
We can now move to the proof of Theorem \ref{thm:SkodaBL}.

\section{Preliminary reductions for Theorem \ref{thm:SkodaBL}}\label{sec:reductions}
\paragraph{\textbf{No base locus}}
We can assume that the sections $h_1, \dots, h_r$ have no common zeros. Indeed, let $D$ be the zero-set of $h_r$. Then $X \setminus D$ is again Stein and $h|_{X \setminus D}$ has no zeros. Assuming that Theorem \ref{thm:SkodaBL} holds for $\{h_1 = \dots = h_r = 0\} = \varnothing$, we obtain $\tilde{f} \in H^0(X \setminus D, (E^{\oplus r} \otimes K_X)|_{X \setminus D})$ such that
\[
  g|_{X \setminus D} = h|_{X \setminus D} \dototimes \tilde{f}
\]
and
\[
  \int_{X \setminus D} \frac{|\tilde{f}|^2 \e^{-\varphi}}{(|h|^2 \e^{-\psi+\varphi})^{\alpha (r-1)}} \leq r \frac{\alpha}{\alpha-1} \norm{g}^2_G < +\infty.
\]
As $h$ is bounded on any bounded chart $U \subset\subset X$,
\[
  \int_{U \setminus D} |\tilde{f}|^2 \e^{-\varphi} \leq C \int_{U \setminus D} \frac{|\tilde{f}|^2 \e^{-\varphi}}{(|h|^2 \e^{-\psi+\varphi})^{\alpha(r-1)}} \leq C r \frac{\alpha}{\alpha-1} \norm{g}^2_G < +\infty,
\]
where $C > 0$ depends on $U$, $h$ and $\alpha(r-1)$. Hence, by Riemann's Removable Singularities Theorem, $\tilde{f}$ extends to $f \in H^0(X, E^{\oplus r} \otimes K_X)$. As $D$ has measure 0,
\[
  \int_{X} \frac{|f|^2 \e^{-\varphi}}{(|h|^2 \e^{-\psi+\varphi})^{\alpha(r-1)}} = \int_{X \setminus D} \frac{|\tilde{f}|^2 \e^{-\varphi}}{(|h|^2 \e^{-\psi+\varphi})^{\alpha(r-1)}} \leq r \frac{\alpha}{\alpha-1} \norm{g}^2_G < +\infty,
\]
and, because $h \dototimes f$ and $g$ coincide on the open set $X \setminus D$, we have $h \dototimes f = g$ everywhere on $X$, solving the division problem.

\begin{remark}
  The same argument proves Theorem \ref{thm:SkodaBL} when $X$ is essentially Stein, given that it has been proved for Stein manifolds. Recall that a manifold $X$ is \emph{essentially Stein} if there is a divisor $D$ such that $X \setminus D$ is Stein. For instance, projective manifolds are essentially Stein.
\end{remark}

\paragraph{\textbf{$X$ bounded pseudoconvex}}
We can reduce $X$ to a relatively compact domain in some larger Stein manifold, and say that sections extend up to the boundary of $X$ (or are defined on $\bar{X}$) if they extend to a neighborhood of $X$ in the ambient Stein manifold. We can also assume that $\omega$ and $E,G$ extend to the ambient manifold ($E,G$ holomorphically) and that the metrics $\e^{-\varphi}$ and $\e^{-\psi}$ are smooth. If the result is proved under these assumptions, then the universality of the bounds yields the general case by standard weak-$*$ compactness theorems, Lebesgue-type limit theorems and approximation results for singular Hermitian metrics on Stein manifolds (see the first paragraph in Section 3 of \cite{BerndtssonLempert2016}).

\section{Dual formulation of the division problem}\label{sec:dual-formulation}
Fix a section $g \in H^0(X,G \otimes K_X)$ to be divided. We may assume, after possibly shrinking $X$, that $g$ is holomorphic up to the boundary of $X$. Let $\gamma: E^{\oplus r} \otimes K_X \rightarrow G \otimes K_X$ be defined by
\[
  \gamma(e_1,\dots,e_r) := h_1 \otimes e_1 + \dots + h_r \otimes e_r.
\]

\begin{proposition}\label{prop:solExistence}
  There exists $f = (f_1, \dots, f_r) \in H^0(X, E^{\oplus r} \otimes K_X)$ such that
  \[
    g = h_1 \otimes f_1 + \dots + h_r \otimes f_r = h \dototimes f
  \]
  and
  \[
    \norm{f}^2_{E^{\oplus r}} < +\infty.
  \]
\end{proposition}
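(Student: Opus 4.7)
The goal is merely to produce \emph{some} holomorphic $f$ with $g = h \dototimes f$ and finite weighted norm, without worrying about the sharp constant; this existence will later let the dual formulation of Section \ref{sec:dual-formulation} work with a nonempty set of candidates. After the reductions of Section \ref{sec:reductions}, $X$ is a relatively compact domain in a larger Stein manifold, the metrics $\e^{-\varphi}$ and $\e^{-\psi}$ are smooth, and $h_1,\dots,h_r$ have no common zeros on $\bar X$. Consequently $|h|^2$ is bounded above and away from zero on $\bar X$, so the weight $(|h|^2 \e^{-\psi+\varphi})^{-\alpha(r-1)}$ is bounded; thus it suffices to construct a holomorphic $f$ with $\gamma(f) = g$ that is merely $L^2$ with respect to any smooth Hermitian metric.

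First I would produce a smooth, generally non-holomorphic, solution. Because $h_1,\dots,h_r$ have no common zeros, the holomorphic bundle morphism $\gamma$ is fibrewise surjective, so $N := \ker \gamma$ is a rank $r-1$ holomorphic subbundle of $E^{\oplus r} \otimes K_X$ and $\gamma$ admits a smooth splitting $\sigma$. A concrete choice, using the fibre metric on $E$ to identify $\overline{E^*} \cong E$ and contracting the $G$-factors with the metric on $G$, is $\sigma(g) = (\bar h_1,\dots,\bar h_r)\, g / |h|^2$. Setting $f_0 := \sigma(g)$, the section $f_0$ is smooth on $\bar X$, satisfies $\gamma(f_0) = g$, and is bounded; but in general $\bar\partial f_0 \neq 0$.

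The correction step is a standard $\bar\partial$-problem in the holomorphic vector bundle $N$. Since $\gamma$ is holomorphic and $g$ is holomorphic, $\gamma(\bar\partial f_0) = \bar\partial g = 0$, so $\bar\partial f_0$ is an $N$-valued, $\bar\partial$-closed $(0,1)$-form. The Steinness of $X$ and the relative compactness reduction allow us to apply Hörmander's $L^2$ theory to $N$ equipped with any smooth Hermitian metric and a sufficiently plurisubharmonic weight, producing an $L^2$ section $u$ of $N$ with $\bar\partial u = -\bar\partial f_0$. Then $f := f_0 + u$ is holomorphic, $\gamma(f) = g$, and is $L^2$; combined with boundedness of the weight, this gives $\norm{f}^2_{E^{\oplus r}} < +\infty$. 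The only subtlety, rather than a real obstacle, is that one needs $N$ to be an honest holomorphic vector bundle for $\bar\partial$ on $N$ to make sense — this is exactly what the no-common-zeros reduction provides, and without it one would have to work sheaf-theoretically with the coherent subsheaf $\ker \gamma$ via Cartan's Theorem B.
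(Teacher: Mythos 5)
Your proof is correct, but it takes a more analytic route than the paper's. The paper observes that it suffices to produce \emph{any} holomorphic solution on the ambient Stein manifold, since restriction to the relatively compact $X$ automatically gives finiteness of the weighted norm; existence then comes from the short exact sequence $0 \to \ker\gamma \to E^{\oplus r}\otimes K_X \to G\otimes K_X \to 0$ and Cartan's Theorem B, which kills $H^1(X,\ker\gamma)$ and makes $\gamma$ surjective on global sections. You instead write down the explicit Koszul-type smooth splitting $f_0 = \bar h\, g/|h|^2$ and correct it by solving $\bar\partial u = -\bar\partial f_0$ in the kernel bundle via H\"ormander's $L^2$ theory. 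The two arguments hinge on exactly the same two facts --- that the absence of common zeros makes $\ker\gamma$ an honest holomorphic vector bundle of rank $r-1$, and that its first cohomology vanishes on a Stein manifold --- your $\bar\partial$-step being the Dolbeault incarnation of the paper's $H^1$-vanishing (indeed, $\bar\partial f_0$ represents the image of $g$ under the connecting map). What your version buys is explicitness and a direct quantitative $L^2$ bound from the H\"ormander estimate; what it costs is the extra (routine, but real) step of choosing a weight that makes the higher-rank bundle $\ker\gamma$ Nakano-positive so that the $\bar\partial$-equation is solvable with estimates. The paper's version avoids all estimates by outsourcing the solvability to Theorem B and letting relative compactness plus boundedness of the weight do the $L^2$ work --- the same boundedness observation you make at the start.
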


\begin{proof}
  Since $X$ is a relatively compact domain in a Stein manifold, any solution $f$ in the ambient manifold will restrict to a solution on $X$ with bounded $L^2$-norm. Hence, it suffices to show that for a Stein manifold $X$ there is a not-necessarily-$L^2$ solution of the division problem.

  As the $h_1, \dots, h_r$ have no common zeros, the map $\gamma$ is a surjective morphism of vector bundles and thus we have the short exact sequence of vector bundles
  \[
    0 \longrightarrow \ker \gamma \longrightarrow E^{\oplus r} \otimes K_X \longrightarrow G \otimes K_X \longrightarrow 0.
  \]
  The induced sequence in cohomology then yields
  \[
    0 \rightarrow H^0(X, \ker \gamma) \rightarrow H^0(X, E^{\oplus r} \otimes K_X) \rightarrow H^0(X,G \otimes K_X) \rightarrow H^1(X,\ker \gamma) = 0,
  \]
  where the last term on the right vanishes by Cartan's Theorem B. Hence, the map induced by $\gamma$ in cohomology is surjective, meaning that for any $g \in H^0(X,G \otimes K_X)$ we can find
  \[
    f = (f_1, \dots, f_r) \in H^0(X, E^{\oplus r} \otimes K_X)
  \]
  such that
  \[
    g = \gamma \circ f = h_1 \otimes f_1 + \dots + h_r \otimes f_r,
  \]
  proving the statement.
\end{proof}

Since there is a solution $f$ with finite $L^2$-norm, there is a (unique) solution $\tilde{f}$ with minimal $L^2$-norm. To prove Theorem \ref{thm:SkodaBL}, we thus need to estimate $\norm{\tilde{f}}_{E^{\oplus r}}$.

\begin{lemma}\label{lemma:minimalNorm}
  Let $f \in H^0(X,E^{\oplus r} \otimes K_X)$ be any solution to the division problem with finite $L^2$-norm. Then the solution $\tilde{f}$ with minimal $L^2$-norm has norm
  \[
    \norm{\tilde{f}}_{E^{\oplus r}}^2 = \sup_{\xi \in \ann  H^0(X,\ker\gamma)} \frac{|\xi(f)|^2}{\norm{\xi}_*^2},
  \]
  where $\norm{\cdot}_*$ is the norm for the dual Hilbert space $H^0(X, E^{\oplus r} \otimes K_X)^*$ and $\ann H^0(X,\ker\gamma)$ is the annihilator of $H^0(X,\ker\gamma)$, i.e.~all linear functionals on $H^0(X, E^{\oplus r} \otimes K_X)$ that vanish on $H^0(X,\ker\gamma)$.

  Moreover, one can restrict the supremum to functionals $\xi_\eta \in H^0(X, E^{\oplus r} \otimes K_X)^*$ of the form
  \[
    \xi_\eta(f) := (\gamma \circ f, \eta)_G = \int_X \frac{(h \dototimes f) \bar{\eta} \e^{-\psi}}{(|h|^2 \e^{-\psi+\varphi})^{\alpha(r-1)+1}},
  \]
  for $\eta \in C^\infty_c(X,G \otimes K_X)$ \paren{smooth compactly supported sections of $G \otimes K_X$}.
\end{lemma}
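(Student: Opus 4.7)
The approach is classical Hilbert space duality, combined with a density argument for the restricted class of functionals. Let $\mathcal{H}$ denote the Hilbert space of elements $f \in H^0(X, E^{\oplus r} \otimes K_X)$ with finite $\norm{\cdot}_{E^{\oplus r}}$-norm. The set of solutions to the division problem is the affine subspace $f + (H^0(X, \ker\gamma) \cap \mathcal{H})$, of which $\tilde{f}$ is characterized as the unique element orthogonal to the closed subspace $H^0(X, \ker\gamma) \cap \mathcal{H}$.

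For the first equality, any $\xi \in \ann H^0(X, \ker\gamma)$ satisfies $\xi(f) = \xi(\tilde{f})$ since $f - \tilde{f} \in H^0(X, \ker\gamma)$, whence $|\xi(f)|/\norm{\xi}_* \leq \norm{\tilde{f}}$ by definition of the dual norm. Equality is attained by the Riesz functional $v \mapsto (v, \tilde{f})_\mathcal{H}$, which lies in $\ann H^0(X, \ker\gamma)$ by the orthogonality of $\tilde{f}$ and has dual norm exactly $\norm{\tilde{f}}$.

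For the restriction to $\xi_\eta$, note first that each $\xi_\eta$ belongs to $\ann H^0(X, \ker\gamma)$ because $\gamma$ kills the kernel, and is bounded on $\mathcal{H}$ since (after the reductions of Section \ref{sec:reductions}) the weights appearing in $\xi_\eta$ are bounded on $\mathrm{supp}(\eta)$. Let $G_\eta \in \mathcal{H}$ be the Riesz representative, so that $\xi_\eta(v) = (v, G_\eta)_\mathcal{H}$. The key claim is that
\[
  V := \overline{\mathrm{span}\{G_\eta : \eta \in C^\infty_c(X, G \otimes K_X)\}} = (H^0(X, \ker\gamma) \cap \mathcal{H})^\perp.
\]
The inclusion $\subset$ is clear from $\xi_\eta \in \ann H^0(X, \ker\gamma)$. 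For the reverse, if $v \in V^\perp$ then $(\gamma \circ v, \eta)_G = \xi_\eta(v) = 0$ for every $\eta \in C^\infty_c$; by density of $C^\infty_c$ this forces the holomorphic section $\gamma \circ v$ to vanish identically, so $v \in H^0(X, \ker\gamma) \cap \mathcal{H}$.

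Since $\tilde{f} \in V$, there is a sequence $G_{\eta_n} \rightarrow \tilde{f}$ in $\mathcal{H}$. Using $(f, \tilde{f})_\mathcal{H} = \norm{\tilde{f}}^2$ (which follows from $f - \tilde{f} \perp \tilde{f}$), we get
\[
  \frac{|\xi_{\eta_n}(f)|^2}{\norm{\xi_{\eta_n}}_*^2} = \frac{|(f, G_{\eta_n})_\mathcal{H}|^2}{\norm{G_{\eta_n}}_\mathcal{H}^2} \overset{n \to +\infty}{\longrightarrow} \frac{|(f, \tilde{f})_\mathcal{H}|^2}{\norm{\tilde{f}}_\mathcal{H}^2} = \norm{\tilde{f}}^2,
\]
which, combined with the upper bound from the first part, yields the restricted supremum formula. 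The most delicate point — and the only one requiring care — is the density step turning the vanishing of $(\gamma \circ v, \eta)_G$ against all test sections into $\gamma \circ v \equiv 0$; in the reduced setup this is routine, since the weight in the pairing is bounded on $\mathrm{supp}(\eta)$ and $\gamma \circ v$ is continuous.
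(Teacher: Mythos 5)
Your proof is correct and follows essentially the same route as the paper: the orthogonality characterization of $\tilde{f}$, the standard Hilbert-space duality identity, and the observation that $(\gamma\circ v,\eta)_G=0$ for all test sections $\eta$ forces $\gamma\circ v\equiv 0$, which is exactly how the paper justifies restricting to the $\xi_\eta$. Your version merely makes explicit (via the Riesz representatives $G_\eta$ and the limit of the ratios) the density step that the paper states more briefly on the dual side.
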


\begin{proof}
  Note first that the supremum is independent of the choice of the arbitrary $L^2$ solution $f$. Indeed, if $\xi \in \ann H^0(X,\ker\gamma)$ and $\gamma \circ f = \gamma \circ f' = g$, then by linearity $f - f' \in H^0(X,\ker\gamma)$, so that $\xi(f) = \xi(f')$.

  Next, we claim that $\tilde{f} \perp H^0(X,\ker\gamma)$. Indeed, if $k \in H^0(X,\ker\gamma)$, then $\gamma \circ (\tilde{f} + \varepsilon k) = g$ for all $\varepsilon \in \mathbb{C}$. As $\tilde{f}$ is the minimal norm solution, we have that
  \[
    \mathbb{C} \ni \varepsilon \longmapsto \norm{\tilde{f} + \varepsilon k}_{E^{\oplus r}}^2 = \norm{\tilde{f}}_{E^{\oplus r}}^2 + 2 \Re [(\tilde{f},k)_{E^{\oplus r}} \varepsilon] + O(|\varepsilon|^2)
  \]
  has minimum at $\varepsilon = 0$ (here $(\cdot,\cdot)_{E^{\oplus r}}$ denotes the $L^2$ inner product on $E^{\oplus r} \otimes K_X$). Hence $(\tilde{f}, k)_{E^{\oplus r}} = 0$.

  Finally, notice that if $k \in H^0(X,\ker \gamma)$ then
  \[
    \xi_\eta(k) = (\gamma \circ k, \eta)_G = 0,
  \]
  i.e.~$\xi_\eta \in \ann H^0(X,\ker \gamma)$. Conversely, if
  \[
    0 = \xi_\eta(f) = (\gamma \circ f, \eta)_G
  \]
  for all $\eta \in C^\infty_c(X,G \otimes K_X)$, then $\gamma \circ f = 0$. Hence
  \[
    \left\{ \xi_\eta \mid \eta \in C^\infty_c(X,G \otimes K_X) \right\} \subseteq \ann H^0(X, \ker\gamma)
  \]
  is dense and we may restrict to elements $\xi_\eta$ when computing the supremum.
\end{proof}

By Lemma \ref{lemma:minimalNorm}
\begin{equation}\label{eq:dualFormulation}
  \begin{split}
    \norm{\tilde{f}}_{E^{\oplus r}}^2 &= \sup_{\eta \in C^\infty_c(X,G \otimes K_X)} \frac{|(\gamma \circ f,\eta)|^2}{\norm{\xi_\eta}_*^2} \\
    &= \sup_{\eta \in C^\infty_c(X,G \otimes K_X)} \frac{|(g,\mathcal{P}\eta)|^2}{\norm{\xi_\eta}_*^2} \leq \norm{g}_G^2 \sup_{\eta \in C^\infty_c(X,G \otimes K_X)} \frac{\norm{\mathcal{P} \eta}_G^2}{\norm{\xi_\eta}_*^2},
  \end{split}
\end{equation}
where
\[
  \mathcal{P}: L^2(X,G \otimes K_X) \longrightarrow H^0(X,G \otimes K_X) \cap L^2(X, G \otimes K_X)
\]
denotes the Bergman projection. Therefore, to prove Theorem \ref{thm:SkodaBL} it suffices to prove that
\[
  \norm{\mathcal{P}\eta}_G^2 \leq r \frac{\alpha}{\alpha-1} \norm{\xi_\eta}_*^2
\]
for all $\eta \in C^\infty_c(X,G \otimes K_X)$.

\section{Proof of Theorem \ref{thm:SkodaBL}}\label{sec:main-argument}
\subsection{Setup}\label{subsec:setup}
Instead of working directly on the vector bundle $E^{\oplus r} \otimes K_X \rightarrow X$, we lift everything to the line bundle
\[
  L := \pr_X^* (E \otimes K_X) \otimes \pr_{\mathbb{P}_{r-1}}^* \mathcal{O}_{\mathbb{P}_{r-1}}(1) \longrightarrow X \times \mathbb{P}_{r-1},
\]
where $\mathcal{O}_{\mathbb{P}_{r-1}}(1)$ is the hyperplane bundle of $\mathbb{P}_{r-1}$ and $\pr_X$, $\pr_{\mathbb{P}_{r-1}}$ are the projections of $X \times \mathbb{P}_{r-1}$ on $X$, $\mathbb{P}_{r-1}$ respectively. Explicitly, fix once for all coordinates $v_1, \dots, v_r$ for $\mathbb{C}^r$ (descending to the homogeneous coordinates $[v_1:\dots:v_r]$ for $\mathbb{P}_{r-1}$) and declare the lift of a section $s \in H^0(X, E^{\oplus r} \otimes K_X)$ to be the section $\hat{s} \in H^0(X \times \mathbb{P}_{r-1}, L)$ defined by
\begin{equation}\label{eq:lift}
  \hat{s}(x,[v]) := v^* \cdot s(x) = v_1^* s_1(x) + \dots + v_r^* s_r(x) \in H^0(X \times \mathbb{P}_{r-1}, L),
\end{equation}
where $v_1^*, \dots, v_r^*$ are the dual coordinates of $v_1, \dots, v_r$ of $\mathbb{C}^r$.

Notice that the lift is a bijective map, since all sections of $L$ are of the form \eqref{eq:lift}. We can then lift the functionals $\xi_\eta \in H^0(X, E^{\oplus r} \otimes K_X)^*$ of Lemma \ref{lemma:minimalNorm} to functionals $\hat{\xi}_\eta \in H^0(X \times \mathbb{P}_{r-1}, L)^*$ defined as $\hat{\xi}_\eta(\hat{s}) := \xi_\eta(s)$ for all $\hat{s} \in H^0(X \times \mathbb{P}_{r-1}, L)$.

\begin{remark}
  One can interpret the lifted section $\hat{s}$ by thinking of the projective space $\mathbb{P}_{r-1}$ as parametrizing all possible choices of linear combinations (up to scaling). Hence, the value of the section $\hat{s}$ at $(x,[v])$ can be thought of (tautologically) as the linear combination parametrized by $v$ of the entries of the vector $s(x)$. What follows is essentially a procedure to ``single out'' the linear combination given by $[h(x)]$ (the equivalence class of $h(x)$ in $\mathbb{P}_{r-1}$).
\end{remark}

Next, we define a family of metrics for $L \rightarrow X \times \mathbb{P}_{r-1}$, parametrized by
\[
  \tau \in \mathbb{L} := \{ z \in \mathbb{C} \mid \Re z < 0\}.
\]
Toward this end, let
\[
  \chi_\tau(x,v) := \max \left( \log(|v|^2 |h(x)|^2 - |v \cdot h(x)|^2) \e^{-\psi+\varphi} - \Re \tau, \, \log |v|^2 |h(x)|^2 \e^{-\psi+\varphi} \right).
\]
Then, for $\sigma \in L_{(x,[v])}$, set
\[
  \mathfrak{h}_\tau(\sigma, \bar{\sigma})_{(x,[v])} := \frac{r!}{\pi^{r-1}} \e^{-(r-1) \Re \tau} \frac{|\sigma|^2 \e^{-\varphi}}{|v|^2} \left( |v|^2 \e^{-\chi_\tau} \right)^{\alpha(r-1)}.
\]

Notice that whether the maximum defining $\chi_\tau$ is attained by the first or the second entry is independent of the choice of the representative $v$ of $[v] \in \mathbb{P}_{r-1}$, and that the weight $|v|^2 \e^{-\chi_\tau}$ is a well-defined function on $X \times \mathbb{P}_{r-1}$. Notice also that $\chi_\tau$ depends only on $t := \Re \tau$ (as does $\mathfrak{h}_\tau$), so in the following we will write $\chi_t$ (and $\mathfrak{h}_t$) instead.

\begin{remark}
  As we shall soon see, the choice of $\chi_t$ is motivated as follows. For $t=0$, the maximum is realized by the second entry, so that
  \[
    (|v|^2 \e^{-\chi_t})^{\alpha(r-1)} = \frac{1}{(|h(x)|^2 \e^{-\psi+\varphi})^{\alpha(r-1)}},
  \]
  providing the weighting by the norm of $h$ in $\norm{\cdot}_{E^{\oplus r}}$.

  On the other hand, as $t \rightarrow -\infty$, the function $|v|^2 \e^{-\chi_t}$ gets very small at all $v \in \mathbb{P}_{r-1}$ that are not ``sufficiently aligned'' with $h(x)$. The precise sense of this statement will be more evident in Subsection \ref{subsec:extrema}, but for the moment note that $1 - \frac{|v \cdot h(x)|^2}{|v|^2 |h(x)|^2}$ constitutes a measurement of the angle between $v$ and $h(x)$.
\end{remark}

The family of metrics $\mathfrak{h}_\tau$ induces the family of $L^2$-norms
\[
  \norm{\sigma}_\tau^2 := \frac{r!}{\pi^{r-1}} \e^{-(r-1) \Re \tau} \int_{X \times \mathbb{P}_{r-1}} \frac{|\sigma|^2 \e^{-\varphi}}{|v|^2} \left( |v|^2 \e^{-\chi_\tau} \right)^{\alpha(r-1)} \wedge \dVFS,
\]
where $\dVFS$ is the (fixed) Fubini--Study volume form of $\mathbb{P}_{r-1}$.

We interpret the family $\mathfrak{h}_\tau$ as a metric $\mathfrak{h}$ for the pull-back of $L$ on $X \times \mathbb{P}_{r-1} \times \mathbb{L}$, and we claim that sum of the curvature of $\mathfrak{h}$ and the Ricci curvature is non-negative. To start, $\e^{-\varphi - \alpha(r-1)\chi}$ is non-negatively curved: $\e^{-\varphi - \alpha(r-1)(-\psi+\varphi)}$ contributes semipositively by the hypothesis on curvature of Theorem \ref{thm:SkodaBL}, and by Lagrange's identity we have
\[
  \log\left( |v|^2|h(x)|^2 - |v \cdot h(x)|^2 \right) = \log \sum_{1 \leq i < j \leq r} |h_i(x) v_j - h_j(x) v_i|^2,
\]
which is (locally) plurisubharmonic, being the logarithm of a sum of norms squared of (locally) holomorphic functions (likewise for the right-hand side of the maximum). This leaves us to check that the negativity coming from the factor $(|v|^2)^{\alpha(r-1) - 1}$ is compensated by the Ricci curvature coming from the Fubini--Study volume form: in local coordinates we can write $\frac{1}{|v|^2} |v|^{2\alpha(r-1)} \dVFS$ as $\frac{\dif V(z)}{(1 + |z|^2) ^{r+1 - \alpha(r-1)}}$ (where $\dif V(z)$ is the standard Euclidean volume form), which is positively curved for $\alpha < \frac{r+1}{r-1}$.

\subsection{Extrema of the family of norms}\label{subsec:extrema}
We now investigate the behavior of the family of norms $\norm{\cdot}_t$ near $t=0$ and as $t \rightarrow -\infty$.

At the $t=0$ extreme we have $\chi_0(x,v) = \log\left(|v|^2 |h(x)|^2 \e^{-\psi+\varphi}\right)$, so that
\begin{equation}\label{eq:norm0}
  \begin{split}
    \norm{\hat{s}}_0^2 &= \frac{r!}{\pi^{r-1}} \int_{X \times \mathbb{P}_{r-1}} \frac{|v \cdot s|^2 \e^{-\varphi}}{|v|^2} \wedge \frac{\dVFS}{(|h|^2 \e^{-\psi+\varphi})^{\alpha(r-1)}} \\
    &= \int_X \frac{|s|^2 \e^{-\varphi}}{(|h|^2 \e^{-\psi+\varphi})^{\alpha(r-1)}} = \norm{s}_{E^{\oplus r}}^2,
  \end{split}
\end{equation}
which recovers the norm-squared of $s$ before the lifting. Consequently, for $t=0$, lifting functionals also preserves norms:
\[
  \norm{\hat{\xi}_\eta}_{0,*} = \sup_{\hat{s} \in H^0(X \times \mathbb{P}_{r-1}, L)} \frac{|\hat{\xi}_\eta(\hat{s})|}{\norm{\hat{s}}_0} = \sup_{s \in H^0(X,E^{\oplus r} \otimes K_X)} \frac{|\xi_\eta(s)|}{\norm{s}_{E^{\oplus r}}} = \norm{\xi_\eta}_*.
\]

We now turn to the other extreme of the family, i.e.~$t \rightarrow -\infty$. Fix $x \in X$ and let $A_{t,x}$ be the set of $v \in \mathbb{P}_{r-1}$ such that the maximum in $\chi_t$ is achieved by the second entry, i.e.
\[
  A_{t,x} = \left\{ v \in \mathbb{P}_{r-1} \, \middle| \, 1 - \frac{|v \cdot h(x)|^2}{|v|^2 |h(x)|^2} < \e^t \right\}.
\]
By choosing homogeneous coordinates so that $v_1$ is parallel to $h(x)$ (which is not 0 since we are assuming that the $h_i$'s have no common zeros), and by choosing local coordinates so that $v_1=1$, one sees that $A_{t,x}$ is a ball of real dimension $2r-2$, centered at $[h(x)]$ (the origin, in local coordinates) and of radius $\sqrt{\frac{\e^t}{1-\e^t}} \underset{t \rightarrow -\infty}{\sim} \e^{t/2}$.

We can then split $\norm{\hat{s}}_t^2$ into two summands:
\[
  \norm{\hat{s}}_t^2 = \int_X \one_{t,s}(x) + \int_X \two_{t,s}(x),
\]
where
\[
  \one_{t,s}(x) := \frac{r!}{\pi^{r-1}} \frac{\e^{-(r-1)t}}{(|h|^2 \e^{-\psi+\varphi})^{\alpha(r-1)}} \int_{A_{t,x}} \frac{|v \cdot s(x)|^2 \e^{-\varphi}}{|v|^2} \wedge \dVFS
\]
and
\[
  \two_{t,s}(x) := \frac{r!}{\pi^{r-1}} \frac{\e^{-(r-1)t}}{(|h|^2 \e^{-\psi+\varphi})^{\alpha(r-1)}} \int\displaylimits_{\mathbb{P}_{r-1} \setminus A_{t,x}} \frac{|v \cdot s(x)|^2 \e^{-\varphi} \e^{\alpha(r-1)t}}{|v|^2 \left( 1 - \frac{|v \cdot h|^2}{|v|^2|h|^2} \right)^{\alpha(r-1)}} \wedge \dVFS.
\]

For the first term we get
\begin{equation}\label{eq:It}
  \lim_{t \rightarrow -\infty} \one_{t,s}(x) = r \frac{|h \dototimes s|^2 \e^{-\psi}}{(|h|^2 \e^{-\psi+\varphi})^{\alpha(r-1) + 1}},
\end{equation}
since asymptotically
\[
  \int_{A_{t,x}} \frac{|v \cdot s(x)|^2 \e^{-\varphi}}{|v|^2} \wedge \dVFS \underset{t \rightarrow -\infty}{\sim} \frac{\pi^{r-1}}{(r-1)!} \e^{(r-1)t} \frac{|h \dototimes s|^2 \e^{-\varphi}}{|h|^2}.
\]

For the second
\[
  \two_{t,s}(x) = \e^{-(r-1)t} \int_t^0 \e^{-\alpha(r-1)(\tilde{t} - t)} \dif \nu_{x,s}(\tilde{t}),
\]
with
\[
  \nu_{x,s}(t) := \frac{r!}{\pi^{r-1}} \frac{1}{(|h|^2 \e^{-\psi+\varphi})^{\alpha(r-1)}} \int_{A_{t,x}} \frac{|v \cdot s|^2 \e^{-\varphi}}{|v|^2} \wedge \dVFS = \e^{(r-1)t} \one_{t,s}(x).
\]
Clearly $\nu_{x,s}$ is increasing and positive. Moreover, by \eqref{eq:It},
\[
  \lim_{t \rightarrow -\infty} \e^{-(r-1)t} \nu_{x,s}(t) = r \frac{|h \dototimes s|^2 \e^{-\psi}}{(|h|^2 \e^{-\psi+\varphi})^{\alpha(r-1) + 1}}.
\]
Hence, by Lemma \ref{lemma:calculusEstimate},
\begin{equation}\label{eq:IIt}
  \lim_{t \rightarrow -\infty} \two_{t,s}(x) = r \frac{|h \dototimes s|^2 \e^{-\psi}}{(|h|^2 \e^{-\psi+\varphi})^{\alpha(r-1) + 1}} \frac{r-1}{\alpha(r-1) - (r-1)}.
\end{equation}

All in all
\begin{equation}\label{eq:infinity-extremum}
  \lim_{t \rightarrow -\infty} \norm{\hat{s}}_t^2 = r \frac{\alpha}{\alpha-1} \int_X \frac{|h \dototimes s|^2 \e^{-\psi}}{(|h|^2 \e^{-\psi+\varphi})^{\alpha(r-1) + 1}} = r \frac{\alpha}{\alpha - 1} \norm{h \dototimes s}_G^2,
\end{equation}
retrieving (a multiple of) the norm-squared of the image $h \dototimes s$ of $s$.

\subsection{Monotonicity of the family of dual norms and end of the proof}\label{subsec:convexity}
Now that we have a metric $\mathfrak{h}$ for (the pull-back of) $L$ on $X \times \mathbb{P}_{r-1} \times \mathbb{L}$ with positive enough curvature, Berndtsson's Theorem \ref{thm:Berndtsson} gives the core step of the argument. Fix $\eta \in C^\infty_c(X,G \otimes K_X)$.

\begin{lemma}\label{lemma:convexity}
  The function
  \[
    \begin{matrix}
      (-\infty,0] & \longrightarrow & \mathbb{R}                         \\
      t           & \longmapsto     & \log \norm{\hat{\xi}_\eta}_{t,*}^2
    \end{matrix}
  \]
  is non-decreasing. In particular,
  \[
    \norm{\xi_\eta}_*^2 = \norm{\hat{\xi}_\eta}_{0,*}^2 \geq \norm{\hat{\xi}_\eta}_{t,*}^2 \quad \text{for all } t \leq 0.
  \]
\end{lemma}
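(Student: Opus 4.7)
The plan is to apply Berndtsson's theorem on positivity of direct images to the metric $\mathfrak{h}$ constructed in Subsection \ref{subsec:setup}, use it to show that $\tau \mapsto \log \norm{\hat{\xi}_\eta}_{\tau,*}^2$ is plurisubharmonic on $\mathbb{L}$, and then upgrade plurisubharmonicity to monotonicity by combining with the asymptotic behavior at $-\infty$ computed in Subsection \ref{subsec:extrema}.

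First, view $\{\mathfrak{h}_\tau\}_{\tau \in \mathbb{L}}$ as a single Hermitian metric $\mathfrak{h}$ on the pullback of $L$ to $X \times \mathbb{P}_{r-1} \times \mathbb{L}$. The curvature calculation concluded at the end of Subsection \ref{subsec:setup} shows that the sum of the first Chern form of $\mathfrak{h}$ and the Ricci form of $\dVFS$ is non-negative. Projecting onto $\mathbb{L}$ and invoking Berndtsson's theorem then yields that the direct image Hilbert bundle $\mathcal{H} \to \mathbb{L}$, whose fibre over $\tau$ is $H^0(X \times \mathbb{P}_{r-1}, L)$ equipped with the inner product underlying $\norm{\cdot}_\tau$, is Nakano semipositive.

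Dualizing, $\mathcal{H}^*$ is Griffiths seminegative; a general feature of such bundles is that $\log$ of the pointwise norm squared of a holomorphic section is plurisubharmonic. Since the functional $\hat{\xi}_\eta$ is independent of $\tau$, it defines a trivially holomorphic section of $\mathcal{H}^*$, so $\tau \mapsto \log \norm{\hat{\xi}_\eta}_{\tau,*}^2$ is PSH on $\mathbb{L}$. Because $\mathfrak{h}_\tau$ depends only on $t = \Re \tau$, the same is true of the dual norm, and a PSH function with this imaginary-translation invariance is automatically convex in $t$ on $(-\infty, 0)$.

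It then remains to pass from convexity to monotonicity. A convex function on $(-\infty, 0]$ that stays bounded above as $t \to -\infty$ must be non-decreasing, since otherwise the slope inequality would force it to diverge at $-\infty$. The required asymptotic upper bound for $\norm{\hat{\xi}_\eta}_{t,*}^2$ follows by combining the lower bound $\liminf_{t \to -\infty} \norm{\hat{s}}_t^2 \geq r \norm{h \dototimes s}_G^2$ coming from the $I_{t,s}$ part in Subsection \ref{subsec:extrema} with the Cauchy--Schwarz estimate $|\hat{\xi}_\eta(\hat{s})|^2 = |(\gamma \circ s, \eta)_G|^2 \leq \norm{h \dototimes s}_G^2 \norm{\eta}_G^2$; together these control the ratio $|\hat{\xi}_\eta(\hat{s})|^2 / \norm{\hat{s}}_t^2$ uniformly in $s$ for $t$ sufficiently negative. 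The principal technical obstacle is the duality step, i.e.\ extracting from Berndtsson's positivity of the direct image the PSH property of the logarithmic dual norm in this infinite-rank Hilbert bundle setting; this is the content of the extrapolation technique of \cite{Lempert2017} and is the part of the argument requiring the most careful justification.
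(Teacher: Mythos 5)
Your route is the same as the paper's: Berndtsson's theorem applied to $\mathfrak{h}$ on $X \times \mathbb{P}_{r-1} \times \mathbb{L}$ (after a smooth approximation of the max defining $\chi$, which is not smooth as written) gives plurisubharmonicity of $\tau \mapsto \log\norm{\hat{\xi}_\eta}_{\tau,*}^2$, hence convexity in $t = \Re\tau$ by translation invariance, and convexity plus an upper bound as $t \rightarrow -\infty$ forces monotonicity. The duality step you flag as the main obstacle is exactly where the paper, too, simply invokes \cite{Berndtsson2009} together with the extrapolation principle of \cite{Lempert2017}, so no disagreement there.

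The one step that does not work as written is your justification of the uniform bound on $\norm{\hat{\xi}_\eta}_{t,*}^2$. You derive it from $\liminf_{t \rightarrow -\infty} \norm{\hat{s}}_t^2 \geq r \norm{h \dototimes s}_G^2$, which holds for each \emph{fixed} $s$ (by Fatou applied to the $I_{t,s}$ term); but $\norm{\hat{\xi}_\eta}_{t,*}^2$ is a supremum over $s$ at fixed $t$, and a per-$s$ liminf gives no control on that supremum: the threshold of $t$ beyond which $\norm{\hat{s}}_t^2$ dominates, say, $\tfrac{r}{2}\norm{h \dototimes s}_G^2$ could a priori depend on $s$. The repair is to make the comparison pointwise, as in the paper's Step 1: for each $x$, $I_{t,s}(x)$ depends on $s$ only through the single vector $s(x)$, of which it is a positive quadratic form, and the convergence in \eqref{eq:It} of the normalized form to the rank-one form $|h(x)\cdot s(x)|^2/|h(x)|^2$ is uniform over the directions of $s(x)$ and over $x$ in the compact $\bar{X}$ (where $h$ is nowhere zero). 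Hence there is a single $t_0$, independent of $s$ and $x$, such that
\[
  \frac{|h \dototimes s|^2 \e^{-\psi}}{(|h|^2 \e^{-\psi+\varphi})^{\alpha(r-1)+1}} \leq \frac{2}{r}\, I_{t,s}(x) \quad \text{for all } t < t_0,
\]
as an inequality of top forms. Integrating over $X$, using $\int_X I_{t,s} \leq \norm{\hat{s}}_t^2$, and combining with your Cauchy--Schwarz step then gives $\norm{\hat{\xi}_\eta}_{t,*}^2 \leq \tfrac{2}{r}\norm{\eta}_G^2$ uniformly for $t < t_0$, which (together with boundedness on the compact interval $[t_0,0]$) is exactly what the convexity argument needs.
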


\begin{proof}
  \textbf{Step 1.} We first prove that $\sup_{\tau \in \mathbb{L}} \norm{\hat{\xi}_\eta}_{\tau,*}^2 < +\infty$. Once more, $\norm{\hat{\xi}_\eta}^2_{\tau,*}$ only depends on $\Re \tau =: t$, so it suffices to prove that $\norm{\hat{\xi}_\eta}^2_{t,*}$ is uniformly bounded for all $t$ sufficiently negative (the other $t$'s are in a compact interval, so the corresponding norms are automatically uniformly bounded).

  Let
  \[
    C_\eta := \int_X \frac{|\eta|^2 \e^{-\psi}}{(|h|^2 \e^{-\psi+\varphi})^{\alpha(r-1)+1}} < +\infty,
  \]
  then
  \[
    \norm{\hat{\xi}_\eta}_{t,*}^2 = \sup_{\norm{\hat{s}}_t^2 = 1} \left| \int_X \frac{(h \dototimes s) \bar{\eta} \e^{-\psi}}{(|h|^2 \e^{-\psi+\varphi})^{\alpha(r-1)+1}} \right|^2 \leq C_\eta \sup_{\norm{\hat{s}}_t^2 = 1} \int_X \frac{|h \dototimes s|^2 \e^{-\psi}}{(|h|^2 \e^{-\psi+\varphi})^{\alpha(r-1)+1}}.
  \]

  By \eqref{eq:It}, if $t$ is sufficiently negative,
  \[
    \frac{|h \dototimes s|^2 \e^{-\psi}}{(|h|^2 \e^{-\psi+\varphi})^{\alpha(r-1)+1}} \leq \frac{2}{r} \one_{t,s}(x)
  \]
  (in the sense of top forms), so that
  \[
    \norm{\hat{\xi}_\eta}_{t,*}^2 \leq \frac{2 C_\eta}{r} \sup_{\norm{\hat{s}}_t^2=1} \int_X \one_{t,s}(x) \leq \frac{2 C_\eta}{r} < +\infty,
  \]
  as wanted.

  \textbf{Step 2.} Consider now the trivial fibration $(X \times \mathbb{P}_{r-1}) \times \mathbb{L} \overset{\pr_\mathbb{L}}{\longrightarrow} \mathbb{L}$. We have already checked at the end of Subsection \ref{subsec:setup} that the curvature of $\mathfrak{h}$, seen as a metric for $\pr_{X \times \mathbb{P}_{r-1}}^* L \rightarrow (X \times \mathbb{P}_{r-1}) \times \mathbb{L}$, plus the Ricci curvature coming from the Fubini--Study volume form is non-negative on the total space $X \times \mathbb{P}_{r-1} \times \mathbb{L}$. Then, up to a smooth approximation of $\chi$, Berndtsson's Theorem \ref{thm:Berndtsson} implies that $\tau \mapsto \log\norm{\hat{\xi}_\eta}_{\tau,*}^2$ is subharmonic in $\mathbb{L}$. Since $\norm{\hat{\xi}_\eta}_{\tau,*}$ only depends on $t = \Re \tau$, it follows that $t \mapsto \log\norm{\hat{\xi}_\eta}_{t,*}^2$ is convex on $(-\infty,0)$. If this map decreases anywhere on $(-\infty,0)$, then by convexity we would have $\lim_{t \rightarrow -\infty} \log\norm{\hat{\xi}_\eta}_{t,*}^2 = +\infty$, contradicting the uniform boundedness of $\norm{\hat{\xi}_\eta}_{t,*}^2$ obtained in Step 1. Hence the statement follows.
\end{proof}

Let now $s \in H^0(X, (E \otimes K_X)^{\oplus r})$ be any solution of $h \dototimes s = \mathcal{P} \eta$ (such $s$ exists with bounded $L^2$-norm by the same argument of Proposition \ref{prop:solExistence}). Then by \eqref{eq:infinity-extremum} and Lemma \ref{lemma:convexity} we have
\[
  \begin{split}
    \norm{\xi_\eta}_*^2 &= \norm{\hat{\xi}_\eta}_{0,*}^2 \geq \lim_{t \to -\infty} \norm{\hat{\xi}_\eta}_{t,*}^2 \\
    &\geq \lim_{t \to -\infty} \frac{1}{\norm{\hat{s}}_t^2} \left| \int_X \frac{(h \dototimes s) \overline{\mathcal{P}\eta}\e^{-\psi}}{(|h|^2 \e^{-\psi+\varphi})^{\alpha(r-1)+1}} \right|^2 \\
    &= \lim_{t \to -\infty} \frac{\norm{\mathcal{P}\eta}_G^4}{\norm{\hat{s}}_t^2} = \frac{\alpha-1}{\alpha r} \norm{\mathcal{P}\eta}_G^2
  \end{split}
\]
for all $\eta \in C^\infty_c(X,G \otimes K_X)$. Hence, by \eqref{eq:dualFormulation}, we conclude that the minimal-norm solution $\tilde{f}$ to the division problem $h \dototimes f = g$ has norm
\[
  \norm{\tilde{f}}_{E^{\oplus r}}^2 \leq \norm{g}_G^2 \sup_{\eta \in C^0_c(X,G \otimes K_X)} \frac{\norm{\mathcal{P} \eta}^2_G}{\norm{\xi_\eta}_*^2} \leq r \frac{\alpha}{\alpha-1} \norm{g}_G^2,
\]
proving Theorem \ref{thm:SkodaBL}.

\section{Berndtsson and Lempert's proof of the \texorpdfstring{$L^2$}{L2} extension theorem, revisited}\label{sec:L2-extension}
The proof of Theorem \ref{thm:L2-extension} is in many respects quite similar to the proof of Theorem \ref{thm:SkodaBL} and indeed, as already mentioned in the introduction, the Berndtsson and Lempert approach to the $L^2$ extension theorem served as an inspiration for the proof of Theorem \ref{thm:SkodaBL}. Hence, we will not delve too much into the analogous technical details (that are in any case already described in \cite{BerndtssonLempert2016}) and instead focus on the differences and the general philosophy.

\subsection{Preliminary reductions}
As in the proof of Theorem \ref{thm:SkodaBL}, we can assume that $X$ is a relatively compact domain in some larger Stein manifold and that the metrics involved are smooth (see Section \ref{sec:reductions} and \cite{BerndtssonLempert2016}). Moreover, perhaps after shrinking $X$ further, we can assume that $Z$ meets the boundary of $X$ transversely and that the section $f \in H^0(Z, L \otimes K_Z)$ to be extended is holomorphic up to the boundary of $Z$.

Since the singularities of $Z$ are of codimension at least one in $Z$, they are contained in a hypersurface $H$ of $X$ not containing $Z$. Similarly to what we did to remove the base locus in Section \ref{sec:reductions}, we can then reduce to the case of smooth $Z$ (and $\dif T|_Z \not\equiv 0$) by solving the problem for $Z \setminus H \subset X \setminus H$ and then extending the solution to $X$ by Riemann's Removable Singularities Theorem and the identity principle.

\subsection{Dual formulation of the extension problem}
As in Proposition \ref{prop:solExistence}, we can assume that there is some solution $F \in H^0(X, L \otimes L_Z \otimes K_X)$ to $F|_Z = f \wedge \dif T$ with finite $L^2$-norm. Then, as in Lemma \ref{lemma:minimalNorm}, the norm of the solution $\tilde{F}$ with minimal $L^2$-norm is
\[
  \norm{\tilde{F}}_X^2 = \sup_{g \in C^\infty_c(Z, L|_Z \otimes K_Z)} \frac{|\xi_g(f) |^2}{\norm{\xi_g}_*^2},
\]
where $\xi_g$ is the functional that associates
\[
  \xi_g(s) := \int_Z \sigma \bar{g} \e^{-\varphi}
\]
to $s \in H^0(X, L \otimes L_Z \otimes K_X)$, with $s|_Z =: \sigma \wedge \dif T$. Therefore, 
\[
  \norm{\tilde{F}}_X^2 \leq \norm{f}_Z^2 \sup_{g \in C^\infty_c(Z, L|_Z \otimes K_Z)} \frac{\norm{\mathcal{P}g}_Z^2}{\norm{\xi_g}_*^2},
\]
where 
\[
  \mathcal{P}: L^2(Z, L|_Z \otimes K_Z) \longrightarrow H^0(Z, L|_Z \otimes K_Z) \cap L^2(Z, L|_Z \otimes K_Z)
\]
denotes the Bergman projection. Thus it suffices to prove that 
\begin{equation}\label{eq:L2-dualFormulation}
  \norm{\mathcal{P}g}_Z^2 \leq \pi \left( 1 + \frac{1}{\delta} \right) \norm{\xi_g}_*^2
\end{equation}
for all $g \in C_c^\infty(Z, L|_Z \otimes K_Z)$ (see also (3.4) in \cite{BerndtssonLempert2016}).

\subsection{The family of metrics}
We now define a family of metrics for $L \otimes L_Z \rightarrow X$ by introducing a weight $\chi_\tau$ that ``collapses'' $X$ onto $Z$:
\[
  \chi_\tau := \max( \log|T|^2 - \lambda - \Re \tau, 0).
\]
Of course this function only depends on $\Re\tau =: t$ and thus in the following we will use the notation $\chi_t$ instead. We then obtain a corresponding family of metrics $\mathfrak{h}_\tau$ for $L \otimes L_Z \rightarrow X$ by setting
\[
  \mathfrak{h}_\tau := \e^{-\Re \tau} \e^{-\varphi-\lambda} \e^{-(1+\delta)\chi_\tau},
\]
and a family of norms for sections $s \in H^0(X, L \otimes L_Z \otimes K_X)$ given by
\[
  \norm{s}_t^2 := \e^{-t} \int_X |s|^2 \e^{-\varphi-\lambda} \e^{-(1+\delta)\chi_t}.
\]
We will interpret $\mathfrak{h}_\tau$ as a metric $\mathfrak{h}$ for $\pr_X^* (L \otimes L_Z) \rightarrow X \times \mathbb{L}$ (recall that $\mathbb{L}$ denotes the left complex half-plane). Notice that $\mathfrak{h}$ has non-negative curvature by the hypotheses of Theorem \ref{thm:L2-extension}.

\begin{remark}
  Compared to the metrics used in \cite{BerndtssonLempert2016}, the multiplicative constant in front of $\chi_t$ is the \emph{fixed} value $1+\delta$ and thus we will not be able to send it to infinity. Rather than being a problem, this and Lemma \ref{lemma:calculusEstimate} are what keep the curvature of $\mathfrak{h}$ under control without assuming that $\I\ddbar\lambda\geq0$ (see also \cite{Nguyen2023,NguyenWang2023} for similar observations).
\end{remark}

We will denote by $\norm{\cdot}_{t,*}$ the induced dual norms on linear functionals of 
\[
  L^2(X, L \otimes L_Z \otimes K_X) \cap H^0(X, L \otimes L_Z \otimes K_X).
\]

\subsection{Extrema of the family of norms}
Clearly one has $\norm{s}_0^2 = \norm{s}_X^2$. To study the other extremum $t \rightarrow -\infty$, fix $t < 0$ and consider the set $A_t$ of points in $X$ for which the maximum in $\chi_t$ is attained by 0:
\[
  A_t := \left\{ x \in X \, \middle| \, |T(x)|^2 \e^{-\lambda} \leq \e^t \right\}.
\]
We then write  
\[
  \norm{s}_t^2 = \e^{-t} \int_{A_t} |s|^2 \e^{-\varphi-\lambda} + \e^{-t} \int_{X \setminus A_t} |s|^2 \e^{-\varphi-\lambda} \left(\frac{\e^t}{|T|^2 \e^{-\lambda}}\right)^{1+\delta}
\]
and proceed to estimate the two summands as $t \rightarrow -\infty$.

Notice first that the set $A_t$ collapses to $Z$ as $t \rightarrow -\infty$. More precisely, it asymptotically resembles a tube about $Z$ of radius-squared around each $z \in Z$ asymptotic to $\frac{\e^t}{|\dif T(z)|^2 \e^{-\lambda}}$. Write $s|_Z =: \sigma \wedge \dif T$, then 
\begin{equation}\label{eq:L2-It}
  \lim_{t \rightarrow -\infty} \e^{-t} \int_{A_t} |s|^2 \e^{-\varphi-\lambda} = \pi \int_Z |\sigma|^2 \e^{-\varphi}.
\end{equation}

As in Subsection \ref{subsec:extrema}, the second integral can be rewritten as 
\[
  \e^{-t} \int_t^0 \e^{-(1+\delta)(\tilde{t} - t)} \dif \nu_s(\tilde{t}), \quad \text{with } \nu_s(t) := \int_{A_t} |s|^2 \e^{-\varphi-\lambda}.
\]
Since $\nu_s$ is clearly positive increasing and satisfies $\lim_{t \rightarrow -\infty} \e^{-t} \nu_s(t) = \pi \int_{Z} |\sigma|^2 \e^{-\varphi}$, Lemma \ref{lemma:calculusEstimate} gives 
\[
  \lim_{t \rightarrow -\infty} \e^{-t} \int_t^0 \e^{-(1+\delta)(\tilde{t} - t)} \dif \nu_s(\tilde{t}) = \frac{\pi}{\delta} \int_Z |\sigma|^2 \e^{-\varphi}.
\]

All in all one gets 
\begin{equation}\label{eq:L2-liminf}
  \lim_{t \rightarrow -\infty} \norm{s}_t^2 = \pi \left( 1 + \frac{1}{\delta} \right) \int_Z |\sigma|^2 \e^{-\varphi}.
\end{equation}

\subsection{Monotonicity of the family of dual norms}
As in Lemma \ref{lemma:convexity} (or \cite[Lemma 3.2]{BerndtssonLempert2016}), to prove that 
\[
  \norm{\xi_g}_*^2 = \norm{\xi_g}_{0,*}^2 \geq \norm{\xi_g}_{t,*}^2 \quad \text{for all } t \leq 0
\]
it suffices to show that $\sup_{t \leq 0} \norm{\xi_g}_{t,*}^2 < +\infty$. Berndtsson's Theorem \ref{thm:Berndtsson} will then give the required estimate. Let  
\[
  C_g := \int_Z |g|^2 \e^{-\varphi} < +\infty
\]
(recall that $g$ has compact support in $Z$), then 
\[
  \norm{\xi_g}_{t,*}^2 = \sup_{\norm{s}_t = 1} \left| \int_Z \sigma \bar{g} \e^{-\varphi} \right|^2 \leq C_g \sup_{\norm{s}_t = 1} \int_Z |\sigma|^2 \e^{-\varphi}.
\]

As in Lemma \ref{lemma:convexity}, it suffices to check uniform boundedness for $t$ less than some very negative fixed $t_0$. If this is the case, by \eqref{eq:L2-It} one gets
\[
  \int_Z |\sigma|^2 \e^{-\varphi} \leq \frac{2}{\pi} \e^{-t} \int_{A_t} |s|^2 \e^{-\varphi -\lambda} \leq \frac{2}{\pi} \e^{-t} \int_X |s|^2 \e^{-\varphi-\lambda} \e^{-(1+\delta)\chi_t},
\]
so that $\norm{\xi_g}_{t,*}$ is bounded by some uniform constant, as wanted.

\subsection{End of the proof}
To conclude the proof it is enough to show that
\begin{equation}\label{eq:L2-finalEstimate}
  \lim_{t \rightarrow -\infty} \norm{\xi_g}_{t,*}^2 \geq \frac{\delta}{\pi(1+\delta)} \norm{\mathcal{P}g}_Z^2
\end{equation}
for all $g \in C^\infty_c(Z, L|_Z \otimes K_Z)$. Indeed, if \eqref{eq:L2-finalEstimate} holds one has 
\[
  \norm{\mathcal{P}g}_Z^2 \leq \pi \left( 1 + \frac{1}{\delta} \right) \lim_{t \rightarrow -\infty} \norm{\xi_g}_{t,*}^2 \leq \pi \left( 1 + \frac{1}{\delta} \right) \norm{\xi_g}_*^2,
\]
which is what is needed to prove Theorem \ref{thm:L2-extension} by \eqref{eq:L2-dualFormulation}.

Let $s \in H^0(X, L \otimes L_Z \otimes K_X)$ be any finite-norm extension of $\mathcal{P}g$ (so that $s|_Z = \mathcal{P}g \wedge \dif T$). Then by \eqref{eq:L2-liminf}
\[
  \lim_{t \to -\infty} \norm{\xi_g}_{t,*}^2 \geq \lim_{t \to -\infty} \frac{1}{\norm{s}_t^2} \left|\int_Z |\mathcal{P}g|^2 \e^{-\varphi}\right|^2 = \lim_{t \to -\infty} \frac{\norm{\mathcal{P}g}_Z^4}{\norm{s}_t^2} \geq \frac{\delta}{\pi(1+\delta)} \norm{\mathcal{P}g}_Z^2.
\]
This concludes the proof of Theorem \ref{thm:L2-extension}.

\section{Remarks on extension in higher codimension}\label{sec:L2-higherCodim}
When the subvariety $Z$ has codimension $k$ higher than 1, the adjoint formulation does not fit the extension problem as well as in the hypersurface case of Theorem \ref{thm:L2-extension}.

A special context in which formulating extension in terms of canonical sections makes sense is the setting of the Ohsawa--Takegoshi--Manivel Theorem \cite{Manivel1993,Demailly2000}. Assume that $Z$ is cut out by a holomorphic section $T$ of some holomorphic vector bundle $E \rightarrow X$ of rank $k$ and that $T$ is generically transverse to the zero section of $E$. This means that we know a priori that the normal bundle of $Z$ in $X$ extends to the vector bundle $E \rightarrow X$ and that we have the adjunction formula $(K_X \otimes \det E)|_Z = K_Z$. Assume also that $\sup_X h(T,\bar{T}) \leq 1$ for some metric $h$ for $E \rightarrow X$.

Then everything goes through in the same way as Theorem \ref{thm:L2-extension}, up to replacing $L_Z$ with $\det E$ and adapting the curvature assumptions. Explicitly, assume that 
\[
  \I\ddbar\varphi \geq \I\ddbar\log\det h
\]
and
\[
  \I\ddbar\varphi \geq \I\ddbar\log\det h - (k+\delta) \I\ddbar\log h(T,\bar{T}).
\]
Then for every holomorphic section $f \in H^0(Z, L|_Z \otimes K_Z)$ such that 
\[
  \int_Z |f|^2 \e^{-\varphi} < +\infty
\]
there is a holomorphic section $F \in H^0(X, L \otimes \det E \otimes K_X)$ such that $F|_Z = f \wedge \det (\dif T)$ and 
\[
  \int_X |F|^2 \e^{-\varphi + \log\det h} \leq \sigma_k \left( 1 + \frac{k}{\delta} \right) \int_Z |f|^2 \e^{-\varphi},
\]
where $\sigma_k := \pi^k/k!$ is the volume of the unit ball in real dimension $2k$. Here the weight is the function $\chi_\tau := \max\left(\log h(T,\bar{T}) - \Re\tau, 0\right)$ and the family of metrics is 
\[
  \e^{-k\Re\tau} \e^{-\varphi+\log\det h} \e^{-(k+\delta)\chi_\tau}.
\]
Notice that this recovers Theorem \ref{thm:L2-extension} with $E = L_Z$ and $h = \e^{-\lambda}$.

Unfortunately in general we cannot assume that $Z$ is cut out by a section of some vector bundle. In such case no clear analogue of adjunction is available and thus the non-adjoint formulation is preferable.

\begin{maintheorem}[\textbf{non-adjoint $L^2$ extension}]\label{thm:L2-nonAdjoint}
  Let $X$ be a Stein manifold with K\"ahler form $\omega$ and let $\rho: X \rightarrow [-\infty,0]$ be such that
  \[
    \log \dist_Z^2 - \beta \leq \rho \leq \log \dist_Z^2 + \alpha
  \]
  for some smooth function $\beta$ on $X$ and some constant $\alpha$ \paren{so that $Z = \{\rho = -\infty\}$}. Fix $\delta>0$, let $L \rightarrow X$ be a holomorphic line bundle with metric $\e^{-\varphi}$, and assume that 
  \[
    \I\ddbar\varphi + \Ric_\omega \geq 0, \quad \I\ddbar\varphi + \Ric_\omega + (k+\delta) \I\ddbar \rho \geq 0.
  \]
  Then for every holomorphic section $f \in H^0(Z,L|_Z)$ such that 
  \[
    \norm{f}_Z^2 := \int_Z |f|^2 \e^{-\varphi + k\beta} \dif V_Z < +\infty
  \]
  there is a holomorphic section $F \in H^0(X,L)$ such that $F|_Z = f$ and 
  \[
    \norm{F}_X^2 := \int_Z |F|^2 \e^{-\varphi} \dif V_X \leq \sigma_k \left( 1 + \frac{k}{\delta} \right) \norm{f}_Z^2.
  \]
\end{maintheorem}

By taking $X$ to be a pseudoconvex domain $D \subset \mathbb{C}^n$, $\rho = G - \psi$, $\varphi = \phi - k\psi$ and $\beta = B + \psi$ one obtains Theorem 3.8 in \cite{BerndtssonLempert2016} (without the assumption that $G$ and $\psi$ are plurisubharmonic).

The argument is the same as the one for Theorem \ref{thm:L2-extension}, except that the functionals $\xi_g$ are now
\[
  \xi_g(s) := \int_Z s \bar{g} \e^{-\varphi + k\beta} \dif V_Z
\]
and the weights are $\chi_\tau := \max (\rho - \Re\tau,0)$. Then the family of norms becomes
\[
  \norm{s}_t^2 := \e^{-kt} \int_X |s|^2 \e^{-\varphi} \e^{-(k+\delta)\chi_t} \dif V_X.
\]

Clearly $\norm{s}_0^2 = \norm{s}_X^2$. For the other end of the family, let $A_t := \{ \varphi < t \}$, then 
\[
  \norm{s}_t^2 = \e^{-kt} \int_{A_t} |s|^2 \e^{-\varphi} \dif V_X + \e^{-kt} \int_{X \setminus A_t} |s|^2 \e^{-\varphi} \e^{-(k+\delta)(\rho-t)} \dif V_X.
\]
Since the set $A_t$ is asymptotic to a tube around $Z$ of radius-squared bounded above by $\e^{t+\beta}$, it follows that  
\[
  \lim_{t \rightarrow -\infty} \e^{-kt} \int_{A_t} |s|^2 \e^{-\varphi} \dif V_X \leq \sigma_k \int_Z |s|^2 \e^{-\varphi+k\beta} \dif V_Z
\]
and, by Lemma \ref{lemma:calculusEstimate},
\[
  \lim_{t \rightarrow -\infty} \e^{-kt} \int_{X \setminus A_t} |s|^2 \e^{-\varphi} \e^{-(k+\delta)(\rho-t)} \dif V_X \leq \frac{k\sigma_k}{\delta} \int_Z |s|^2 \e^{-\varphi+k\beta} \dif V_Z.
\]
All in all 
\[
  \lim_{t \rightarrow -\infty} \e^{-kt} \int_X |s|^2 \e^{-\varphi} \e^{-(k+\delta)\chi_t} \dif V_X \leq \sigma_k \left( 1 + \frac{k}{\delta} \right) \int_Z |s|^2 \e^{-\varphi+k\beta} \dif V_Z,
\]
which is what is needed to prove Theorem \ref{thm:L2-nonAdjoint}.

\printbibliography

\end{document}